\documentclass[11pt,a4paper]{article}
\usepackage[top=2.5cm,bottom=2.5cm,left=2.2cm,right=2.2cm]{geometry}
\usepackage[T1]{fontenc}
\usepackage[utf8]{inputenc}
\usepackage{color}
\usepackage{graphicx}
\usepackage{amsfonts}
\usepackage{extarrows}
\usepackage{amsmath,amsthm,amssymb,color}
\usepackage{hyperref}
\usepackage{eepic}
\usepackage{lineno}
\usepackage{enumerate}	
\usepackage[utf8]{inputenc}
\usepackage{paralist}
\usepackage{cite}
\usepackage{algorithm}
\usepackage{algorithmicx}
\usepackage{algpseudocode}
\usepackage{authblk}
\usepackage{mathtools}
\usepackage{pifont}
\usepackage[numbers,sort&compress]{natbib}
\usepackage{comment}
\usepackage{ulem}

\usepackage{tikz}
\usetikzlibrary{positioning}

\hypersetup
{
	colorlinks=true,
	linkcolor=blue,
	filecolor=blue,
	urlcolor=blue,
	citecolor=cyan,
}

\usepackage{etoolbox}

\newtheorem{theorem}{Theorem}

\newtheorem{lemma}[theorem]{Lemma}
\newtheorem{nota}{Notation}

\newtheorem{conjecture}[theorem]{Conjecture}

\theoremstyle{definition}

\AtBeginEnvironment{proof}{\setcounter{case}{0}}
{\setlength{\leftmargini}{1.5\parindent}
	\begin{itemize}
		\setlength{\itemsep}{-1.1mm}}
	{\end{itemize}}

\newcommand{\ve}{\varepsilon}
\begin{document}
	\title{\bf Further Results on the Maximum Number of Stars in Graphs with Forbidden Properties
		\thanks{This research is supported by National Key R\&D Program of China under grant number 2024YFA1013900, NSFC under grant number 12471327,  JSPS KAKENHI Grant Number 25KF0036, the NSF of Hubei Province Grant Number 2025AFB309,  the China Postdoctoral Science Foundation  Grant Number 2025M773113, the Fundamental Research Funds for the Central Universities, Central China Normal University Grant Number CCNU24XJ026.}}
	
	\author[1]{\bf Yuxuan Liu \footnote{Email: hlioukyu@gmail.com.}}
	\author[2]{\bf Jia-bao Yang \footnote{Email: jbyang1215@163.com.}}
	\author[3,4]{\bf Leilei Zhang \footnote{Corresponding author. Email: mathdzhang@163.com.}}
	
	\affil[1]{\footnotesize Department of Mathematics, Kyushu University, Motooka 744, Nishi-ku, Fukuoka, 819-0395, Japan}
	\affil[2]{\footnotesize School of Mathematics, Nanjing University, Nanjing, 210093, China}
	\affil[3]{\footnotesize School of Mathematics and Statistics, Central China Normal University, Wuhan 430079, China}
	\affil[4]{\footnotesize Faculty of Environment and Information Sciences, Yokohama National University, Yokohama 240-8501, Japan}
	\date{}
	\maketitle
	\begin{abstract}
		A graph $G$ is called $k$-edge-hamiltonian if every linear forest (i.e., a disjoint union of paths) with at most $k$ edges is contained in a Hamilton cycle of $G$. In 2018, F\"uredi, Kostochka and Luo determined the maximum number of $t$-stars in nonhamiltonian graphs, thereby extending an earlier result of Erd\H{o}s. Recently, Berikkyzy, Hogenson, Kirsch and McDonald extended this line of research by determining the maximum number of $t$-stars in graphs that are not $k$-edge-hamiltonian, as well as in graphs failing to satisfy related properties such as traceability, hamiltonian-connectedness and $k$-hamiltonicity. For sufficiently large $t$, they also characterized the extremal graphs, while for smaller values of $t$, they proposed a conjecture. In this paper, we investigate this conjecture. We show that the conjecture fails at the critical value and further establish a threshold-type result describing the behavior of the extremal graphs when $t$ is close to this critical value.
		
		\smallskip
		
		\noindent{\bf Keywords:} $k$-edge-hamiltonian; Stars; Extremal graphs
		\smallskip
		
		\noindent{\bf AMS Subject Classification:} 05C35, 05C45, 05C75
	\end{abstract}
	
	\section{ Introduction}
	
	We adopt standard notation and terminology in graph theory; undefined terms follow Bondy and Murty \cite{BM}. Throughout, all graphs are assumed to be simple, finite and undirected.  For two vertices $u$ and $v$, we use the symbol $u\leftrightarrow v$ to mean that $u$ and $v$ are {\it adjacent} and use $u\nleftrightarrow v$ to mean that $u$ and $v$ are {\it nonadjacent}.  Let $\overline{G}$ denote the complement of $G.$  For two graphs $G$ and $H$, $G\vee H$ denotes the join of $G$ and $H$, which is obtained from the disjoint union $G\cup H$ by adding all edges	joining every vertex of $G$ to every vertex of $H$. A $t$-star $S_t$ is a star consisting of a central vertex and $t$ leaves. An {\it $s$-clique} is a clique of cardinality $s.$ Throughout the paper, $K_s$ denotes the complete graph on $s$ vertices, and $I_s$ denotes an independent set of size $s$.   For graphs we will use equality up to isomorphism, so $G_1=G_2$ means that $G_1$ and $G_2$ are isomorphic. 
	
	A {\it Hamilton cycle} (resp. {\it Hamilton path}) in a graph is a cycle (resp. path) that passes through all vertices.  A graph containing a Hamilton cycle is called a {\it hamiltonian graph}, and a graph containing a Hamilton path is called a {\it traceable graph}. A graph is {\it hamiltonian-connected} if and only if there exists a Hamilton path between every pair of distinct vertices. We begin by introducing an important family of graphs.
	
	\begin{nota}\label{nota1}
		Let $n,i,\ell\in\mathbb{Z}$ with $-1\le\ell\leq n-3$ and $1\leq i\leq \tfrac{n-1-\ell}{2}$. Define $G(n,\ell,i)$ to be the graph $K_{i+\ell}\vee( K_{n-2i-\ell} \cup I_i)$. Denote by $g_t(n,\ell,i)$ the number of $t$-stars in $G(n,\ell,i);$ more precisely,
		$$
		g_t(n,\ell,i)=i \binom{i+\ell}{t}+(n-2i-\ell)\binom{n-i-1}{t}+(i+\ell)\binom{n-1}{t}.
		$$
	\end{nota}
	
	For convenience, we denote by $g(n,\ell,i)$ the number of edges of the graph $G(n,\ell,i)$. In this family of graphs, we will focus on two extremal choices of the parameter $i$.  First, let $i_0=\left\lfloor (n-1-\ell)/2\right\rfloor$, which corresponds to the largest possible value of $i$. In addition, for each nonnegative integer $d$, we define $i_d=\max\{1,\, d-\ell\}$. It is straightforward to verify that $i_d \le i_0$.  Note that $G(n,-1,i)$ is nontraceable, while $G(n,0,i)$ and $G(n,1,i)$ are nonhamiltonian and nonhamiltonian-connected, respectively. One of the foundational results in the study of nonhamiltonian graphs is the following celebrated theorem of Erd\H{o}s.

	\begin{theorem}{\rm (Erd\H{o}s \cite{2})}
		Let G be an $n$-vertex graph with minimum degree $\delta(G )\geq d$, where $1\leq d\leq \lfloor(n-1)/2\rfloor$. If $G$ is not hamiltonian, then $e (G ) \leq {\rm max } \{ g (n,0,i_d ), g (n,0, i_0 ) \} $.
	\end{theorem}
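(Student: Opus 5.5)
We argue through the closure and a Chvátal/Pósa-type degree-sequence condition. Since $G$ is not hamiltonian, its Bondy--Chvátal closure $\mathrm{cl}(G)$, obtained by repeatedly joining nonadjacent vertices whose degree sum is at least $n$, is also not hamiltonian. It has at least as many edges as $G$ and minimum degree at least $d$. So we may assume that $G$ is closed: every pair of nonadjacent vertices $u,v$ satisfies $d(u)+d(v)\le n-1$. By maximality we may also assume $G$ is edge-maximal nonhamiltonian, i.e.\ adding any edge creates a Hamilton cycle. Then $G$ has a Hamilton path between any two nonadjacent vertices, and the standard Pósa crossing argument applies.

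The key step is Chvátal's degree-sequence theorem in contrapositive form. Let $d_1\le\cdots\le d_n$ be the degree sequence. Since $G$ is nonhamiltonian, there is some $i<n/2$ with $d_i\le i$ and $d_{n-i}<n-i$. Because $\delta(G)\ge d$, this $i$ satisfies $d\le d_i\le i$. Combined with $i\le (n-1)/2$, we get $i_d\le i\le i_0$ (here $\ell=0$, so $i_d=\max\{1,d\}=d$).

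Next we bound the edge count. There are $i$ vertices of degree at most $i$, at most $n-2i$ further vertices (those with index between $i+1$ and $n-i$) of degree at most $n-i-1$, and the remaining $i$ vertices of degree at most $n-1$. Hence
\[
2e(G)\le i\cdot i+(n-2i)(n-i-1)+i(n-1)=2g(n,0,i),
\]
which one checks against $g(n,0,i)=\binom{n-i}{2}+i^2$. So $e(G)\le g(n,0,i)$ for some $i\in[i_d,i_0]$.

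Finally, $f(i)=\binom{n-i}{2}+i^2$ is a quadratic in $i$ with positive leading coefficient $3/2$, hence convex. Its maximum over the integer interval $[i_d,i_0]$ is therefore attained at an endpoint, which gives $e(G)\le\max\{g(n,0,i_d),g(n,0,i_0)\}$.

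The main obstacle is confirming that the crude degree-sum bound matches $g(n,0,i)$ exactly rather than exceeding it; the computation above shows it does. If it had not, the fallback would be the direct Pósa argument on a longest path in an edge-maximal $G$: it produces $i$ vertices of degree at most $i$ whose neighbourhoods lie inside a common $i$-set, and that yields the structure $K_i\vee(K_{n-2i}\cup I_i)$ directly.
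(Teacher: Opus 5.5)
The paper does not prove this statement; Erd\H{o}s's theorem is quoted only as background, so there is no in-paper argument to compare yours against. Your proof is correct and complete:
\begin{itemize}
\item Chv\'atal's theorem applies because $1\le d\le\lfloor (n-1)/2\rfloor$ forces $n\ge 3$.
\item It gives an index $i$ with $d\le d_1\le d_i\le i\le\lfloor (n-1)/2\rfloor=i_0$.
\item Your degree count satisfies $i\cdot i+(n-2i)(n-i-1)+i(n-1)=(n-i)(n-i-1)+2i^2$, which is exactly $2g(n,0,i)$.
\item Convexity of $i\mapsto\binom{n-i}{2}+i^2$ on $[d,i_0]$ finishes the argument.
\end{itemize}
The opening reduction to the closure and to an edge-maximal nonhamiltonian graph is never used and can be deleted. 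The closing ``fallback'' remark can be deleted as well.

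It is worth comparing your argument with Erd\H{o}s's original route through P\'osa's theorem. There, non-hamiltonicity yields some $k$ with $d\le k\le\lfloor (n-1)/2\rfloor$ and a set $S$ of $k$ vertices of degree at most $k$. Counting edges inside $V\setminus S$ plus edges meeting $S$ then gives $e(G)\le\binom{n-k}{2}+k^2$ directly. So the second half of Chv\'atal's condition, $d_{n-i}<n-i$, is not needed for the edge bound alone.

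What your version buys is pointwise domination of the whole degree sequence of $G$ by that of $G(n,0,i)$, namely $i$ copies of $i$, $n-2i$ copies of $n-i-1$, and $i$ copies of $n-1$. Since $s_t(G)=\sum_v\binom{d(v)}{t}$ is a nondecreasing function of the degrees, the same count gives $s_t(G)\le g_t(n,0,i)$ for some $i\in[d,i_0]$ at no extra cost. That is a natural starting point for the $t$-star versions quoted in the paper. After that, one still has to locate the maximum of $g_t(n,0,\cdot)$ over $[d,i_0]$ and decide which endpoint wins. The latter is the kind of comparison, $g_t(n,\ell,1)$ versus $g_t(n,\ell,i_0)$, that the paper's main theorem addresses.
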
  
	
	Extremal problems on $t$-stars can be viewed as natural generalizations of classical edge extremal problems. Extending earlier work of Gerbner~\cite{G}, Ma, Hou and Yin~\cite{MHY} determined the maximum number of $t$-stars in $\{M_{s+1},K_{k+1}\}$-free graphs, where $M_{s+1}$ denotes a matching of size $s+1$. Recently, Liu and Ren \cite{LR} determined the maximum number of $t$-stars in graphs forbidding both a matching and an arbitrary graph. The maximum number of $t$-stars has also been studied for broom-free \cite{G2}, $\mathcal{C}_{\ge k}$-free \cite{GSTZ}, $C_4$-free \cite{G3}, $S_r$-free \cite{cvk} and  $P_k$-free \cite{GSTZ}, where $\mathcal{C}_{\ge k}$ denotes cycles of length at least $k$, and $\mathcal{L}_{\ge k}$ denotes linear forests with at least $k$ edges. Additional results on $t$-stars can be found in~\cite{HQ,GP,ALNS}. 
	
	Furthermore, F\"{u}redi, Kostochka, and Luo \cite{4} showed that the extremal graphs $G(n,0,i_0)$ and $G(n,0,i_d)$ not only maximize the number of edges among nonhamiltonian graphs with $n$ vertices and minimum degree at least $d$, but also maximize the number of $t$-stars.
	
	\begin{theorem} {\rm (F\"{u}redi, Kostochka, Luo \cite{3})}\label{thm1}
		Let $G$ be an $n$-vertex graph with minimum degree $\delta(G)\geq d$, where $1\leq d\leq \lfloor\frac{n-1}{2}\rfloor$, and let  $t \in \{ 1,\ldots, n-1 \}$. If $G$ is not hamiltonian, then $s_t (G ) \leq {\rm max } \{ g_t (n,0,i_d ) ,g_t (n,0,i_0 ) \}$.
	\end{theorem}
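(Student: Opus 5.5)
We prove Theorem~\ref{thm1} by combining the Bondy--Chv\'atal closure with a Pósa-type degree-sequence analysis, and then optimising a convex function of the degree sequence.

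\textbf{Step 1: pass to an edge-maximal graph.} Since $s_t(G)=\sum_{v}\binom{d(v)}{t}$ is monotone under adding edges, we may assume $G$ is edge-maximal nonhamiltonian with $\delta(G)\ge d$; adding edges preserves the minimum-degree condition. Then $G$ equals its own Bondy--Chv\'atal closure, so every pair of nonadjacent vertices $u,v$ satisfies $d(u)+d(v)\le n-1$.

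\textbf{Step 2: extract a Pósa/Chv\'atal witness.} Order the degrees $d_1\le\cdots\le d_n$. Chv\'atal's theorem applied to the nonhamiltonian graph $G$ gives an integer $i$ with $d\le i<n/2$ such that $d_i\le i$ and $d_{n-i}\le n-i-1$. The bound $i\ge d$ holds because $d_i\ge\delta\ge d$. A cleaner structural version follows from closure maximality. Take a vertex $v$ of minimum degree $i'$ among the vertices of degree less than $n/2$, chosen appropriately. The vertices nonadjacent to it can have degree at most $n-1-i'$. In this way one obtains a set $A$ of $i$ vertices of degree at most $i$, together with at most $n-2i$ further vertices (excluding the $i$ highest) of degree at most $n-i-1$.

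\textbf{Step 3: bound $s_t$.} The constraints from Step 2 give
$$s_t(G)\le i\binom{i}{t}+(n-2i)\binom{n-i-1}{t}+i\binom{n-1}{t}=g_t(n,0,i),$$
where the three terms come from the $i$ low vertices, the $n-2i$ middle vertices, and the $i$ top vertices respectively. The constraints are coordinatewise upper bounds on the sorted degree sequence, and $\binom{x}{t}$ is increasing in $x$, so this bound is valid. It remains to maximise $g_t(n,0,i)$ over $d\le i\le\lfloor (n-1)/2\rfloor$. Note that $i_d=\max\{1,d\}=d$ for $\ell=0$.

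\textbf{Step 4: convexity in $i$ (the main obstacle).} We show that $f(i)=g_t(n,0,i)$ is convex on this range, or at least has no interior strict maximum, so that its maximum is attained at $i=d$ or $i=i_0$. Our first attempt is to compute the second difference $f(i+1)-2f(i)+f(i-1)$ directly. The term $i\binom{i}{t}$ is convex in $i$. The term $i\binom{n-1}{t}$ is linear. The difficult term is $(n-2i)\binom{n-i-1}{t}$. Its discrete second difference can be expanded using $\binom{m}{t}-\binom{m-1}{t}=\binom{m-1}{t-1}$, which gives a combination of the form
$$(n-2i)\binom{n-i-2}{t-2}-4\binom{n-i-2}{t-1}\text{-type terms}.$$
These need not be nonnegative. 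If direct convexity fails, we fall back on the standard argument: prove that $f$ is first decreasing and then increasing, i.e.\ unimodal with a minimum, by showing that the sign of $f(i+1)-f(i)$ changes at most once from negative to positive. This reduces to a monotonicity statement for a ratio of binomial coefficients, which we expect to verify by comparing $\binom{i}{t-1}$ against $\binom{n-i-2}{t-1}$-type quantities. This step is the core calculation. Once it is done, $s_t(G)\le\max\{g_t(n,0,i_d),g_t(n,0,i_0)\}$ follows.
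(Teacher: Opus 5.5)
The paper does not prove this statement. It is quoted from F\"{u}redi, Kostochka and Luo, so your argument has to stand on its own. Steps 1--3 are sound. Step 1 and the ``cleaner structural version'' in Step 2 are in fact unnecessary: Chv\'atal's condition applied directly to $G$ gives an $i<n/2$ with $d\le d_i\le i$ (so $d\le i\le i_0$) and $d_{n-i}\le n-i-1$. The coordinatewise bound then yields $s_t(G)\le g_t(n,0,i)$.

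The gap is Step 4, which is all that remains of the theorem and which you leave as a calculation you ``expect to verify''. Your reason for abandoning the direct convexity route is a sign error: the second difference of $b(i)=(n-2i)\binom{n-i-1}{t}$ has no negative terms. Both factors $n-2i$ and $\binom{n-i-1}{t}$ decrease in $i$, so the cross terms in the second difference of their product are products of two negative increments, hence positive. Set $N=n-i-1$ and use $\binom{N+1}{t}-2\binom{N}{t}+\binom{N-1}{t}=\binom{N-1}{t-2}$ and $\binom{N+1}{t}-\binom{N-1}{t}=\binom{N}{t-1}+\binom{N-1}{t-1}$. This gives
\[
b(i+1)-2b(i)+b(i-1)=(n-2i)\binom{n-i-2}{t-2}+2\binom{n-i-1}{t-1}+2\binom{n-i-2}{t-1}\ \ge 0\quad\text{if } n-2i\ge 0 .
\]
Likewise, for $a(i)=i\binom{i}{t}$,
\[
a(i+1)-2a(i)+a(i-1)=i\binom{i-1}{t-2}+\binom{i}{t-1}+\binom{i-1}{t-1}\ \ge 0,
\]
with the convention $\binom{m}{-1}=0$ when $t=1$. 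The third term $i\binom{n-1}{t}$ is linear. So $i\mapsto g_t(n,0,i)$ has nonnegative second differences at every interior point of $\{d,\dots,i_0\}$, where $n-2i\ge 3$. Its forward differences are therefore nondecreasing: it first decreases and then increases. Hence its maximum over $d\le i\le i_0$ is attained at $i=d=i_d$ or $i=i_0$.

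Putting this short computation in place of your tentative unimodality plan completes the proof. As written, the proposal stops exactly at the step that needs proving.
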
  
	
	Beyond these classical notions, various strengthened and robustness-based versions of hamiltonicity have been studied. One approach is to measure how strongly hamiltonian a graph is by means of edge-based extensions. In particular, an $n$-vertex graph is called {\it $k$-edge hamiltonian} if every linear forest (that is, a disjoint union of paths) with at most $k$ edges can be extended to a Hamilton cycle of the graph. Another approach focuses on the stability of hamiltonicity under vertex deletions. An $n$-vertex graph is said to be {\it $k$-hamiltonian } if the removal of any set of at most $k$ vertices leaves a hamiltonian graph. Note that both a $0$-edge-hamiltonian graph and a $0$-hamiltonian graph are hamiltonian graphs. As an extension of Theorem \ref{thm1}, F\"{u}redi, Kostochka and Luo established the following result.
	
	\begin{theorem}[F\"{u}redi, Kostochka, Luo \cite{4}]
			Let $G$ be an $n$-vertex graph with minimum degree $\delta(G)\geq d$, where $k+1\leq d\leq \lfloor\frac{n+k-1}{2}\rfloor$.  If $G$ is not $k$-edge hamiltonian, then $e (G ) \leq {\rm max } \{ g(n,k,i_d ), g (n,k,i_0 ) \}$.
	\end{theorem}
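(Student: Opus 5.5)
We prove the statement (the edge bound for non-$k$-edge-hamiltonian graphs with $\delta(G)\ge d$) via a closure argument combined with a Pósa/Chvátal-type degree sequence analysis.

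\emph{Reduction.} Suppose $G$ is not $k$-edge-hamiltonian and has the maximum number of edges under the hypotheses. Adding edges preserves $\delta(G)\ge d$. Hence we may assume $G$ is \emph{edge-maximal}: adding any missing edge $uv$ makes it $k$-edge-hamiltonian. We then use the Kronk/Pósa-type closure lemma for $k$-edge-hamiltonicity. If $d(u)+d(v)\ge n+k$, then $G$ is $k$-edge-hamiltonian if and only if $G+uv$ is. So every nonadjacent pair $u,v$ satisfies $d(u)+d(v)\le n+k-1$. I would prove the closure lemma directly. Take a linear forest $F$ with at most $k$ edges in $G+uv$ that is extendable there, and suppose the Hamilton cycle $C$ extending it uses $uv$. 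Then $C-uv$ is a Hamilton $u$--$v$ path $P$ containing $F-uv$. Do the standard crossing-edge count along $P$, excluding the at most $k$ positions whose forward edge lies in $F$. The degree-sum condition then forces a crossing pair $u x^{+}, v x$ with $xx^{+}\notin E(F)$. This yields a Hamilton cycle in $G$ containing $F$.

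\emph{Degree sequence step.} Order the degrees $d_1\le\dots\le d_n$. The analogue of Chvátal's condition for $k$-edge-hamiltonicity (due to Kronk) says the following: if for every $j<(n+k)/2$ we have $d_{j-k}\le j\Rightarrow d_{n-j}\ge n-j+k$, then $G$ is $k$-edge-hamiltonian. I would prove it from the closure lemma by the usual argument. Take a nonadjacent pair $u,v$ with $d(u)\le d(v)$, maximal degree sum, and set $j=d(u)-k+1$ or so. This produces about $j-k$ vertices of degree at most $d(u)$ and about $n-j$ vertices of degree below $n-j+k$. Since $G$ fails the property, some $j$ violates the condition. Thus there are at least $j-k$ vertices of degree at most $j$ and at least $j+1$ vertices of degree at most $n-j+k-1$. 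Writing $i=j-k$, we get $i$ vertices of degree at most $i+k$ and $n-i$ vertices of degree at most $n-i-1$ in total. The minimum degree forces $i+k\ge d$, i.e. $i\ge d-k$, and $i\ge 1$, so $i\ge i_d$. The range $j<(n+k)/2$ gives $i\le i_0$.

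\emph{Counting.} We have
\[
2e(G)\le i(i+k)+(n-2i-k)(n-i-1)+(i+k)(n-1)=2g(n,k,i),
\]
where I would verify this identity by expanding. The expression $g(n,k,i)$ is a quadratic in $i$ with positive leading coefficient, hence convex on $[i_d,i_0]$. Its maximum is attained at an endpoint, giving $e(G)\le\max\{g(n,k,i_d),g(n,k,i_0)\}$.

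\emph{Main obstacle.} The main obstacle is the closure lemma with the forest $F$. The crossing argument must avoid edges of $F$, and we must handle the case where $u$ or $v$ is an endpoint of a path of $F$. The exact $n+k$ threshold and the index bookkeeping translating to $i\ge i_d$ are where off-by-one errors lurk, so I would check the small cases $k=0$ (Erdős) and $k=1$ against the constructions $G(n,k,i)$.
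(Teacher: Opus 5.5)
The paper does not prove this statement; it is quoted from F\"uredi, Kostochka and Luo, so there is no in-paper argument to compare with. Judged on its own, your route is the classical Erd\H{o}s/Bondy--Chv\'atal argument, and it is correct once a few statements are fixed. The steps are:
\begin{enumerate}
\item take an edge-extremal counterexample;
\item use $(n+k)$-stability of $k$-edge-hamiltonicity to get $d(u)+d(v)\le n+k-1$ for every non-edge;
\item read off a degree profile from a non-adjacent pair of maximum degree sum;
\item sum the degrees;
\item use convexity of $2g(n,k,i)=3i^2-(2n-2k-1)i+n(n-1)$ in $i$.
\end{enumerate}

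\textbf{The closure lemma is not an ``iff''.} Only the direction ``$G+uv$ $k$-edge-hamiltonian $\Rightarrow$ $G$ $k$-edge-hamiltonian'' holds, and it is the only one you use. For a counterexample to the other direction, take $n=5$, $k=1$, and let $G$ be $K_{2,3}$ plus one edge $ab$ inside the $3$-side, with $u,v$ the $2$-side. Then $G$ is $1$-edge-hamiltonian and $d(u)+d(v)=6=n+k$. Yet $G+uv$ has no Hamilton cycle through $uv$, since that would need two edges among $a,b,c$.

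This non-monotonicity is why your reduction must choose $G$ with the maximum number of edges, as you do, rather than adding edges while the property keeps failing.

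The direction you need is easier than your ``main obstacle'' suggests. Write $P=x_1\cdots x_n$ and
\[
S=\{i: ux_{i+1}\in E(G)\},\qquad T=\{i: vx_i\in E(G)\},
\]
both subsets of $\{1,\dots,n-1\}$. Then $|S\cap T|\ge d(u)+d(v)-(n-1)\ge k+1>|E(F)|$. Since $1\notin T$ and $n-1\notin S$, the switch is a genuine cycle, and it deletes only the edge $x_ix_{i+1}\notin E(F)$. Also $uv\notin E(F)$ because $F\subseteq G$. So where the path endpoints of $F$ lie plays no role.

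\textbf{The index bookkeeping needs correcting.} Take $j=d(u)$, not $d(u)-k+1$. Then:
\begin{itemize}
\item The vertices $w\ne v$ not adjacent to $v$ number at least $n-1-d(v)\ge j-k$, and each has $d(w)\le j$ by maximality of the degree sum.
\item The vertex $u$ together with its non-neighbours gives $n-j$ vertices of degree at most $n+k-1-j$, not $j+1$ such vertices. For $u$ itself this uses $2j\le n+k-1$.
\end{itemize}
With $i=j-k$, the second item says there are $n-i-k$ vertices of degree at most $n-i-1$, not $n-i$. The count $n-i$ would already be false for $G(n,k,i)$. Your displayed degree sum uses the correct middle block of size $n-2i-k$, so with these corrections the bound $2e(G)\le 2g(n,k,i)$ holds.

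The range of $i$ also follows: $i=d(u)-k\ge d-k=i_d$, and $i\le\lfloor (n-k-1)/2\rfloor=i_0$ because $2d(u)\le n+k-1$. With convexity, the argument is then complete. Stating Kronk's degree-sequence theorem separately is unnecessary, since the saturated graph gives the degree profile directly.
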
  
	
	Motivated by the results of F\"{u}redi, Kostochka, and Luo~\cite{3,4}, Berikkyzy, Hogenson, Kirsch, and McDonald~\cite{1} generalized Theorem \ref{thm1} by replacing hamiltonicity with related properties such as traceability, hamiltonian-connected, $k$-edge hamiltonian, and $k$-hamiltonian. As an immediate consequence, they obtained the following result.
	
	\begin{theorem}[Berikkyzy, Hogenson, Kirsch, McDonald~\cite{1}]\label{thm2}
		Let $G$ be an $n$-vertex graph, and let $t \in \{1,2,\ldots,n-1\}$. Suppose that at least one of the following holds:
		\begin{itemize}
			\item[{\rm $(H_1)$}] $G$ is not hamiltonian;
			\item[{\rm $(H_2)$}] $G$ is not traceable;
			\item[{\rm $(H_3)$}] $G$ is not hamiltonian-connected;
			\item[{\rm $(H_4)$}] $G$ is not $k$-edge hamiltonian for some integer $1 \le k \le n-3$; or
			\item[{\rm $(H_5)$}] $G$ is not $k$-hamiltonian for some integer $1 \le k \le n-3$.
		\end{itemize}
		Let $\ell = 0, -1, 1, k, k$ correspond to the cases $(H_1)$-$(H_5)$, respectively. Then
		\[
		s_t(G) \le \max\{ g_t(n,\ell,1),\, g_t(n,\ell,i_0)\},
		\]
		and this bound is tight.
	\end{theorem}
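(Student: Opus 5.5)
We derive Theorem~\ref{thm2} from a minimum-degree version, specialised to $d=1$ (so $i_d=\max\{1,1-\ell\}$, i.e.\ $i_1=1$ for $\ell\ge 0$ and $i_1=2$ for $\ell=-1$; the traceable case needs separate care). The unified tool is the Bondy--Chvátal closure together with a Pósa/Chvátal-type degree-sequence condition for each property $(H_1)$--$(H_5)$. For each property $\mathcal P$ there is a closure threshold $n+\ell$: adding an edge $uv$ with $d(u)+d(v)\ge n+\ell$ preserves the failure of $\mathcal P$. For nonhamiltonicity this threshold is $n$; for traceability it is $n-1$; for hamiltonian-connectedness it is $n+1$; for $k$-edge-hamiltonicity and $k$-hamiltonicity it is $n+k$. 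Since adding edges only increases $s_t$, we may assume $G$ is $(n+\ell)$-closed and still fails $\mathcal P$.

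Step one: apply the degree-sequence criterion. Let $d_1\le\dots\le d_n$ be the degrees of the closed graph $G$. Failure of $\mathcal P$ forces an index $i$ with $1\le i\le (n-1-\ell)/2$ such that $d_i\le i+\ell$ and $d_{n-i-\ell}\le n-i-1$. These are Chvátal's condition, Kronk's variant for hamiltonian-connected graphs, and Chvátal/Kronk-type conditions for $k$-hamiltonian and $k$-edge-hamiltonian graphs. Then $i$ vertices have degree at most $i+\ell$, a further $n-2i-\ell$ vertices have degree at most $n-i-1$, and the remaining $i+\ell$ have degree at most $n-1$. Because $\binom{x}{t}$ is increasing in $x$,
\[
s_t(G)=\sum_v\binom{d(v)}{t}\le i\binom{i+\ell}{t}+(n-2i-\ell)\binom{n-i-1}{t}+(i+\ell)\binom{n-1}{t}=g_t(n,\ell,i).
\]
This holds even without closure, since only the degree sequence is used. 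For $(H_2)$ we instead add a universal vertex: $G$ is nontraceable iff $G\vee K_1$ is nonhamiltonian. This produces the same bound with $\ell=-1$, and we must check that $i=1$ is still admissible in the range.

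Step two, which I expect to be the main obstacle, is to show that $g_t(n,\ell,i)$ as a function of $i$ on $[1,i_0]$ is maximised at an endpoint. The plan is to prove that $g_t(n,\ell,\cdot)$ is convex in $i$, or at least that its forward difference $\Delta(i)=g_t(n,\ell,i+1)-g_t(n,\ell,i)$ changes sign at most once, from negative to positive. Compute
\[
\Delta(i)=\binom{i+\ell+1}{t}+(i+1)\binom{i+\ell}{t-1}-\binom{n-i-2}{t}\cdot 2-(n-2i-\ell-2)\binom{n-i-2}{t-1}+\binom{n-1}{t}-\binom{n-i-1}{t}+\dots
\]
Here the terms coming from the low-degree part increase in $i$, and those from the middle part also increase as $i$ grows, because $\binom{n-i-1}{t}$ shrinks. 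Each summand is monotone in $i$, so $\Delta$ is nondecreasing, which gives convexity. I would verify this by writing the second difference and checking the sign term by term, using Pascal's rule. If this direct verification gets messy, the fallback is the approach of Füredi--Kostochka--Luo: treat $i$ as a real variable and show that the function is a sum of convex polynomials on the relevant interval.

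Step three is tightness. The graph $G(n,\ell,i)$ fails $\mathcal P$. Removing the $i+\ell$ universal vertices, together with the $\ell$ vertices or edges used to force the $k$-property, leaves the $i$ isolated vertices of $I_i$ plus a component $K_{n-2i-\ell}$, which is too many components. This standard toughness argument applies for each $\ell$. Hence the maximum $\max\{g_t(n,\ell,1),g_t(n,\ell,i_0)\}$ is attained.
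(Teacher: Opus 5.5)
This statement is not proved in the paper. It is quoted from Berikkyzy, Hogenson, Kirsch and McDonald as background, so there is no internal proof to compare with. Judged on its own, your outline is sound and is the natural route. A Chv\'atal-type degree-sequence criterion, which follows from the $(n+\ell)$-stability of each property, gives $s_t(G)\le g_t(n,\ell,i)$ for some $1\le i\le i_0$. Discrete convexity of $i\mapsto g_t(n,\ell,i)$ then puts the maximum at $i\in\{1,i_0\}$, and $G(n,\ell,1)$, $G(n,\ell,i_0)$ give tightness. Several points need repair, but none is a real gap.

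\textbf{Framing.} Drop the ``minimum-degree version with $d=1$'' framing. The statement has no degree hypothesis, and for $(H_2)$ the extremal graph $G(n,-1,1)=K_{n-1}\cup K_1$ has an isolated vertex. Step one done directly is all you need; for $(H_2)$ use $G\vee K_1$, which does give $i\ge 1$.

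\textbf{Deriving the criterion.} If you derive the criterion from the closure instead of citing it, take a nonadjacent pair $u,v$ of maximum degree sum in the closure $H$. The index produced is $i=d_H(u)-\ell$, which can be $\le 0$ when $\ell\ge 0$. In that case $d_G(u)\le\ell$, so $G$ is a subgraph of $G(n,\ell,1)$ and the bound is immediate.

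\textbf{The forward difference.} Your displayed $\Delta(i)$ is miscomputed. Using $\binom{m+1}{t}-\binom{m}{t}=\binom{m}{t-1}$, the correct forward difference is
\[
\Delta(i)=\binom{i+\ell+1}{t}+i\binom{i+\ell}{t-1}+\binom{n-1}{t}-2\binom{n-i-1}{t}-(n-2i-\ell-2)\binom{n-i-2}{t-1}.
\]
Each summand is nondecreasing in $i$. For the last one you need the sign condition $n-2i-\ell-2\ge 1$, which holds for $1\le i\le i_0-1$. It makes that term minus a product of two nonnegative nonincreasing factors. State this explicitly, since the term-by-term argument depends on it.

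\textbf{The real-variable fallback.} Avoid it. The polynomial $\binom{x}{t}$ is not convex on $[0,t-1]$, and $i+\ell$ lies in that range for small $i$ whenever $t>\ell+2$.

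\textbf{Tightness.} State the argument property by property; no extra vertices or edges are removed. Deleting the $i+\ell$ vertices of $K_{i+\ell}$ from $G(n,\ell,i)$ leaves $i+1$ components. On the other hand, each of the following has at most $i$ components:
\begin{itemize}
\item for $(H_1)$, a Hamilton cycle minus $i$ vertices;
\item for $(H_2)$, a Hamilton path minus $i-1$ vertices;
\item for $(H_3)$, a Hamilton path whose two ends lie among the $i+1$ deleted vertices;
\item for $(H_4)$, a Hamilton cycle through a fixed $k$-edge path inside $K_{i+k}$, minus those $i+k$ vertices.
\end{itemize}
For $(H_5)$, delete $k$ clique vertices to obtain $G(n-k,0,i)$, which is nonhamiltonian by the $(H_1)$ case.
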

	
	It is then natural to ask for a characterization of the extremal graphs. Berikkyzy, Hogenson, Kirsch, and McDonald showed that, once the larger of $g_t(n,\ell,1)$ and $g_t(n,\ell,i_0)$ is determined, the extremal family can be described completely (Theorem 7 in \cite{1}). For the case $\ell=0$, they proved the following theorem.
	
	\begin{theorem}[Berikkyzy, Hogenson, Kirsch, McDonald~\cite{1}]
		
		In Theorem~\ref{thm2}, for $\ell=0$, the following holds.
		
		\begin{itemize}
			
			\item If $t \ge \frac{n+1}{2}$, then $g_t(n,0,1)\le g_t(n,0,i_0).$
			
			\item If $t < \frac{n+1}{2}$, then $g_t(n,0,i_0) \le g_t(n,0,1).$
			
		\end{itemize}
		
		Moreover, both inequalities are strict for $n \ge 6$.
		
	\end{theorem}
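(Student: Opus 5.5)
We compare $g_t(n,0,1)$ and $g_t(n,0,i_0)$ directly, where $i_0=\lfloor (n-1)/2\rfloor$. By Notation~\ref{nota1} with $\ell=0$,
\[
g_t(n,0,1)=\binom{1}{t}+(n-2)\binom{n-2}{t}+\binom{n-1}{t},\qquad
g_t(n,0,i_0)=i_0\binom{i_0}{t}+(n-2i_0)\binom{n-i_0-1}{t}+i_0\binom{n-1}{t}.
\]
The plan is to split by the parity of $n$. For $n$ odd, $i_0=(n-1)/2$, so $n-2i_0=1$ and $n-i_0-1=i_0$, giving $g_t(n,0,i_0)=(i_0+1)\binom{i_0}{t}+i_0\binom{n-1}{t}$. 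For $n$ even, $i_0=(n-2)/2$, so $n-2i_0=2$ and $n-i_0-1=i_0+1$. In both cases we look at the difference
\[
D(t):=g_t(n,0,i_0)-g_t(n,0,1)=(i_0-1)\binom{n-1}{t}-(n-2)\binom{n-2}{t}+(\text{small-clique terms}),
\]
where the small-clique terms are $(i_0+1)\binom{i_0}{t}$ (odd case) or $i_0\binom{i_0}{t}+2\binom{i_0+1}{t}$ (even case), minus $\binom1t$.

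The main step is to normalize by $\binom{n-2}{t}$. Using $\binom{n-1}{t}=\frac{n-1}{n-1-t}\binom{n-2}{t}$, the dominant part becomes
\[
\binom{n-2}{t}\Bigl[\frac{(i_0-1)(n-1)}{n-1-t}-(n-2)\Bigr].
\]
The bracket is increasing in $t$ and vanishes exactly when $n-1-t=\frac{(i_0-1)(n-1)}{n-2}$, that is, near $t=(n+1)/2$. For odd $n$, one checks that $t=(n+1)/2$ gives $(i_0-1)(n-1)/(i_0-1)=n-1$ versus $n-2$. So the bracket is positive for $t\ge (n+1)/2$, and a similar computation shows it is negative for $t\le (n-1)/2$. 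The small-clique terms are nonnegative, so for $t\ge (n+1)/2$ we get $D(t)>0$ immediately, apart from the $-\binom1t$ term, which matters only when $t=1$. When $t>i_0+1$ these terms vanish anyway.

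The hard direction is $t<(n+1)/2$, where the positive small-clique terms $(i_0+1)\binom{i_0}{t}$ must be dominated by the negative main term. The bound I would aim for is $\binom{i_0}{t}/\binom{n-2}{t}\le \bigl(\tfrac{i_0}{n-2}\bigr)^t$, which is roughly $2^{-t}$, while the negative bracket has size about $(n-2)-\frac{(i_0-1)(n-1)}{n-1-t}$, which is of order $n$ times $\frac{(n+1)/2-t}{n-1-t}$. The delicate range is $t$ close to $(n-1)/2$, where the bracket is only of constant size. There I would compute directly, using $\binom{i_0}{t}$ with $t\approx i_0$, which is tiny (equal to $1$ or $i_0$) compared with $\binom{n-2}{t}$. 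For small $t$ (say $t\le 2$) the inequality is an explicit polynomial check in $n$. I expect this boundary analysis, together with the even-$n$ case where $i_0+1$ appears, to be the main obstacle.

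Finally, for strictness when $n\ge 6$, I would track where each inequality is tight. Equality can occur only in small cases, and those would be verified by hand for $n\le 5$.
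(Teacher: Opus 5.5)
The paper contains no proof of this statement; it is quoted from Berikkyzy, Hogenson, Kirsch and McDonald. What the paper does supply is the closed form of $f(n,\ell,t)=g_t(n,\ell,1)-g_t(n,\ell,i_0)$, and your $D(t)$ is exactly $-f(n,0,t)$. Your bracket is $B(t)=-\frac{n-1}{n-1-t}\,c(t)$, where $c(t)=\frac{n-1}{2}-t+\frac{t}{n-1}$ (odd $n$) or $\frac n2-t+\frac{t}{n-1}$ (even $n$) is the paper's coefficient of $\binom{n-1}{t}$.

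Your first bullet is correct. In both parities the least integer $t\ge\frac{n+1}{2}$ is $t=n-i_0$, where $n-1-t=i_0-1$ and $B(t)=1$, and $B$ is increasing once $i_0\ge2$. Two small repairs are needed:
\begin{itemize}
\item At $t=n-1$ your normalization divides by $n-1-t=0$, so compute directly: $D(n-1)=i_0-1>0$ for $n\ge5$.
\item For even $n$ the second bullet runs up to $t=\frac n2$, not $\frac{n-1}{2}$. There $B(\frac n2)=1-\frac{n-1}{i_0}<0$, and the extra term $2\binom{i_0+1}{t}$ must also be absorbed.
\end{itemize}

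The genuine gap is the second bullet, which you leave as a plan. You name the two competing sizes: a bracket of order $n\frac{(n+1)/2-t}{n-1-t}$, against $(i_0+1)\binom{i_0}{t}/\binom{n-2}{t}\lesssim(i_0+1)2^{-t}$. But you never show that one dominates the other uniformly for $2\le t\le\lfloor n/2\rfloor$ and all $n\ge6$.

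The paper's own tool does not fill this. Lemma~\ref{lemLog}, as used for Theorem~\ref{mainthm}, needs $t\le\frac{n}{2+\varepsilon}$ and $n\ge36(\frac{2+\varepsilon}{\varepsilon})^2$, so it is silent near the critical value and for small $n$. For $\ell=0$, the paper's critical-value formulas $f(n,0,\frac{n-1}{2})=\frac12\binom{n-1}{(n-1)/2}-\frac{n+1}{2}$ and $f(n,0,\frac n2)=\frac{n}{2(n-1)}\binom{n-1}{n/2}-2$ are exactly your endpoint case.

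One way to close the gap, for odd $n$ with $i_0\ge3$:
\begin{itemize}
\item $|B(t)|=\frac{(2i_0-1)(i_0-t)+i_0}{2i_0-t}$ is concave and decreasing, with $|B(2)|=i_0-1$ and $|B(i_0)|=1$. Hence $|B(t)|\ge i_0+1-t$ on $[2,i_0]$.
\item When $t$ increases by one (for $t\ge1$), $\binom{i_0}{t}/\binom{2i_0-1}{t}$ is multiplied by $\frac{i_0-t}{2i_0-1-t}\le\frac12$. So $(i_0+1)\binom{i_0}{t}/\binom{2i_0-1}{t}\le\frac{i_0(i_0+1)}{2i_0-1}2^{1-t}$.
\item The difference of these two bounds is concave in $t$, so it suffices to check $t=2$ and $t=i_0$. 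These reduce to $(3i_0-1)(i_0-2)>0$ and $2^{i_0-1}>\frac{i_0(i_0+1)}{2i_0-1}$.
\end{itemize}
Even $n$ works the same way on $[2,i_0+1]$, with $|B(t)|=\frac{2i_0(i_0+1-t)+i_0+1}{2i_0+1-t}$, using that $\binom{i_0+1}{t}/\binom{2i_0}{t}$ at least halves at each step from $t=2$ on.

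Finally, strictness is not settled by checking $n\le5$ by hand. Equality genuinely holds at $n=5$ for $t\in\{1,2\}$, so you must show your margins are positive for $n\ge6$. For instance, $-D(1)=(i_0-1)(i_0-2)$ for odd $n$, $-D(1)=i_0(i_0-1)$ for even $n$, and $-D(2)=\frac12(i_0-1)(i_0-2)(3i_0-1)$ for odd $n$; all of these are positive for $n\ge6$.
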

	
	For the general case, they proved the following theorem.
	
	\begin{theorem}[Berikkyzy, Hogenson, Kirsch, McDonald~\cite{1}]
		In Theorem~\ref{thm2}, for all $\ell$, if $t \ge \frac{n+\ell+1}{2}$, then $g_t(n,\ell,1)\le g_t(n,\ell,i_0),$ and the inequality is strict for $0\leq \ell \leq n-5$.
	\end{theorem}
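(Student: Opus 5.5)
We need to show that if $t\ge \frac{n+\ell+1}{2}$ then $g_t(n,\ell,1)\le g_t(n,\ell,i_0)$, with strict inequality for $0\le\ell\le n-5$. The plan is to compare the two counts term by term, using the explicit formula from Notation~\ref{nota1}. Write $i=i_0=\lfloor (n-1-\ell)/2\rfloor$, so that $n-2i-\ell\in\{1,2\}$.

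First compute
\[
g_t(n,\ell,1)=\binom{1+\ell}{t}+(n-2-\ell)\binom{n-2}{t}+(1+\ell)\binom{n-1}{t}
\]
and
\[
g_t(n,\ell,i)=i\binom{i+\ell}{t}+(n-2i-\ell)\binom{n-i-1}{t}+(i+\ell)\binom{n-1}{t}.
\]
The centers of degree $n-1$ contribute $(i-1)\binom{n-1}{t}$ more stars in $G(n,\ell,i_0)$ than in $G(n,\ell,1)$. The large clique contributes $(i-1)$ fewer vertices of degree $n-2$, since $n-2-\ell=(n-2i-\ell)+2(i-1)$, and its remaining vertices have degree $n-i-1$ rather than $n-2$. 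So it suffices to show
\[
(i-1)\binom{n-1}{t}+i\binom{i+\ell}{t}+(n-2i-\ell)\binom{n-i-1}{t}\ \ge\ \binom{1+\ell}{t}+(n-2-\ell)\binom{n-2}{t}.
\]
Since $\binom{1+\ell}{t}\le\binom{i+\ell}{t}$, and the term $(n-2i-\ell)\binom{n-i-1}{t}\ge 0$ can be dropped, it is enough to prove
\[
(i-1)\binom{n-1}{t}\ \ge\ (n-2-\ell)\binom{n-2}{t}.
\]

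Next use the identity $\binom{n-1}{t}=\frac{n-1}{n-1-t}\binom{n-2}{t}$, valid for $t\le n-2$; the case $t=n-1$ is trivial, since then the right side is zero. The inequality becomes
\[
(i-1)(n-1)\ge (n-2-\ell)(n-1-t).
\]
The hypothesis $t\ge\frac{n+\ell+1}{2}$ gives $n-1-t\le \frac{n-\ell-3}{2}$. Also $i-1\ge \frac{n-\ell-4}{2}$, with $i-1= \frac{n-\ell-3}{2}$ when $n-\ell$ is odd.

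Here the crude drop of terms is too lossy when $n-\ell$ is even, so this is the main obstacle. I would handle it by keeping the dropped terms: the term $(n-2i-\ell)\binom{n-i-1}{t}$, and the difference $\binom{i+\ell}{t}-\binom{1+\ell}{t}$, which is positive once $i+\ell\ge t$. Alternatively, compare $(i-1)(n-1)$ with $(n-2-\ell)(n-1-t)$ directly.
\begin{itemize}
\item If $n-\ell$ is odd: $(i-1)(n-1)=\frac{n-\ell-3}{2}(n-1)$, while the right side is at most $(n-2-\ell)\frac{n-\ell-3}{2}$. This holds since $n-1\ge n-2-\ell$ for $\ell\ge -1$.
\item If $n-\ell$ is even: the integrality of $t$ gives $t\ge \frac{n+\ell+2}{2}$, so $n-1-t\le\frac{n-\ell-4}{2}$. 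The same comparison then works.
\end{itemize}
Finally, strictness for $0\le\ell\le n-5$ follows because $n-1>n-2-\ell$ when $\ell\ge0$, and $i-1\ge1$ together with $\binom{n-2}{t}>0$ makes the inequality strict. The remaining edge cases ($t=n-1$, or $\binom{n-2}{t}=0$) are checked directly: there the extra $(i-1)\binom{n-1}{t}>0$ term on the left still gives strictness as long as $i_0\ge2$, i.e.\ $\ell\le n-5$.
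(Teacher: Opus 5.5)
The paper does not prove this statement; it only quotes it from Berikkyzy, Hogenson, Kirsch and McDonald, so there is no in-paper argument to compare with. Your proof is correct and self-contained. The reduction to $(i_0-1)(n-1)\ge (n-2-\ell)(n-1-t)$ is sound. The parity split, with $t\ge\frac{n+\ell+2}{2}$ by integrality when $n-\ell$ is even, closes both cases using only $\ell\ge -1$. Your strictness argument works for $0\le\ell\le n-5$: there $i_0\ge 2$ and $n-1>n-2-\ell$, and the case $t=n-1$ is handled separately.

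A few points in the exposition should be corrected.
\begin{itemize}
\item In this range every term you drop is actually zero. Indeed $1+\ell\le i_0+\ell\le\frac{n+\ell-1}{2}<t$ and $n-i_0-1\le\frac{n+\ell}{2}<t$. So the reduction to $(i_0-1)\binom{n-1}{t}\ge(n-2-\ell)\binom{n-2}{t}$ is an exact equivalence, not a lossy step.
\item For the same reason, your fallback of keeping $\binom{i+\ell}{t}-\binom{1+\ell}{t}$ is vacuous: that difference is $0$ here, never positive. The only real issue when $n-\ell$ is even is the mismatch between $i_0-1=\frac{n-\ell-4}{2}$ and the bound $n-1-t\le\frac{n-\ell-3}{2}$. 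Your integrality observation resolves it.
\item The large clique loses $2(i-1)$ vertices, not $(i-1)$. The inequality you display is nonetheless right.
\end{itemize}

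The paper's closed form for $f(n,\ell,t)=g_t(n,\ell,1)-g_t(n,\ell,i_0)$ gives the same proof. In this range both vanishing binomials drop out, and $f$ collapses to $\bigl(\frac{n-\ell-1}{2}-t+\frac{\ell+1}{n-1}t\bigr)\binom{n-1}{t}$, respectively $\bigl(\frac{n-\ell}{2}-t+\frac{\ell+1}{n-1}t\bigr)\binom{n-1}{t}$. The coefficient has slope $-\frac{n-2-\ell}{n-1}<0$ in $t$. At the smallest admissible $t$ it equals $(\ell+1)\frac{\ell+3-n}{2(n-1)}\le 0$, respectively $(\ell+1)\frac{\ell+4-n}{2(n-1)}\le 0$. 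This is the same computation as yours, but it shows at a glance that equality occurs only when $\ell=-1$ at the threshold value of $t$, or when $i_0=1$.
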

	
	For the case $t\leq \left\lfloor\frac{n+\ell}{2}\right\rfloor$, they proposed the following conjecture.
	
	\begin{conjecture}[Berikkyzy, Hogenson, Kirsch, McDonald \cite{1}]\label{con1}
		In Theorem \ref{thm2}, if $t\leq \lfloor \frac{n+\ell}{2}\rfloor$, then $g_t(n,\ell,1)\ge g_t(n,\ell,i_0)$  for sufficiently large $n$.
	\end{conjecture}
	
	In this paper, we investigate Conjecture~\ref{con1}. We first show that the conjecture fails at the critical value $t=\lfloor(n+\ell)/2\rfloor$. Furthermore, we identify a threshold phenomenon around this point: when $t$ is sufficiently close to $\lfloor(n+\ell)/2\rfloor$, we establish the following theorem, which describes the behavior of the extremal graphs in this regime.  It is easy to verify that Conjecture \ref{con1}  holds when $t=2$. Thus, throughout the rest of the proof, we may assume that $t\ge 3$. 
	
	\begin{theorem}\label{mainthm}
		Let $0<\varepsilon<\tfrac12$. If $3\le t\le \frac{n+\ell}{2+\varepsilon}$ and
		$n\ge \max\!\left\{36\left( \frac{2+\varepsilon}{\varepsilon}\right)^2,\ \ell^2\right\}$,
		then $g_t(n,\ell,1)\ge g_t(n,\ell,i_0)$.
	\end{theorem}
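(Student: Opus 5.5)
The plan is to compare the two quantities directly from the formula in Notation~\ref{nota1}, normalising everything by $\binom{n-1}{t}$. Write $i_0=\lfloor (n-1-\ell)/2\rfloor$, so that $r:=n-2i_0-\ell\in\{1,2\}$ and $n-i_0-1\le \frac{n+\ell}{2}$. Then
\[
g_t(n,\ell,1)-g_t(n,\ell,i_0)=\Big[(n-2-\ell)\tbinom{n-2}{t}-(i_0-1)\tbinom{n-1}{t}\Big]+\tbinom{\ell+1}{t}-i_0\tbinom{i_0+\ell}{t}-r\tbinom{n-i_0-1}{t}.
\]
I drop the nonnegative term $\binom{\ell+1}{t}$. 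Using $\binom{n-2}{t}=\frac{n-1-t}{n-1}\binom{n-1}{t}$ and $i_0-1\le \frac{n-3-\ell}{2}$, the bracket is at least
\[
\tbinom{n-1}{t}\,(n-2-\ell)\Big(\tfrac12-\tfrac{t}{n-1}\Big).
\]
Since $i_0+\ell$ and $n-i_0-1$ are both at most $\frac{n+\ell}{2}$, the negative terms are at most $(i_0+2)\binom{\lfloor (n+\ell)/2\rfloor}{t}$. I then bound this using $\binom{a}{t}/\binom{n-1}{t}\le (a/(n-1))^t$ for $a\le n-1$. It therefore suffices to prove
\[
(n-2-\ell)\Big(\tfrac12-\tfrac{t}{n-1}\Big)\ \ge\ \tfrac{n+5-\ell}{2}\,q^t,\qquad q:=\tfrac{n+\ell}{2(n-1)}. \tag{$\ast$}
\]

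Next I use the hypothesis $n\ge \ell^2$, i.e.\ $\ell\le\sqrt n$, to make every $\ell$-dependence a lower-order perturbation. First, $q\le \frac12(1+\frac{\sqrt n+1}{n-1})$, which is close to $\frac12$. Second, $t\le \frac{n+\sqrt n}{2+\varepsilon}$, so
\[
\tfrac12-\tfrac{t}{n-1}\ \ge\ \tfrac{\varepsilon}{2(2+\varepsilon)}-O\big(\tfrac{1}{\sqrt n}\big).
\]
The hypothesis $\sqrt n\ge 6(2+\varepsilon)/\varepsilon$ is exactly what lets me absorb the $O(1/\sqrt n)$ error and conclude $\frac12-\frac{t}{n-1}\ge \frac{\varepsilon}{c(2+\varepsilon)}$ for an explicit constant $c$ (something like $c=4$). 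I will track these constants honestly. For instance, $\frac{\sqrt n+1}{n-1}\le\frac{2}{\sqrt n}\le\frac{\varepsilon}{3(2+\varepsilon)}$ once $n\ge 9$.

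The main obstacle is that $(\ast)$ cannot be settled by one crude estimate. When $t$ is small (say $t=3$), $q^t\approx\frac18$ is not small, but then $\frac12-\frac{t}{n-1}\approx\frac12$, so the left side is about $n/2$ against about $n/16$ on the right. When $t$ is near the upper limit, the left side only has the factor of order $\varepsilon$, but $q^t$ is exponentially small. So I will split into two cases.

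In the first case, $t\le \frac{n-1}{8}$. Here $\frac12-\frac{t}{n-1}\ge\frac38$, while $q^t\le q^3\le(\frac12+\frac{\varepsilon}{6(2+\varepsilon)})^3<\frac{3}{16}$. Since $n-2-\ell$ and $\frac{n+5-\ell}{2}$ differ by roughly a factor $2$ (again using $\ell\le\sqrt n$ and $n$ large), $(\ast)$ follows.

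In the second case, $\frac{n-1}{8}<t\le\frac{n+\ell}{2+\varepsilon}$. Here $q^t\le (0.55)^{(n-1)/8}$, which is exponentially small in $n$. The left side is at least $(n-2-\ell)\frac{\varepsilon}{c(2+\varepsilon)}\ge \Omega(\sqrt n)$, because $\frac{\varepsilon}{2+\varepsilon}\ge 6/\sqrt n$. So $(\ast)$ holds as soon as $\frac{n+5}{2}(0.55)^{(n-1)/8}\le \frac{6(n-2-\sqrt n)}{c\sqrt n}$. This is a routine check, since $n\ge 36\cdot 25=900$.

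I expect the only delicate part to be choosing the split point and constants so that both cases close under exactly the stated bound $n\ge\max\{36((2+\varepsilon)/\varepsilon)^2,\ell^2\}$. If the first case is tight, I would move the split point to $t\le\alpha n$ for a smaller $\alpha$. I would also use the sharper bound $\binom{a}{t}/\binom{n-1}{t}\le q^3$ rather than anything weaker.
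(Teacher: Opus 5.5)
Your proof is correct, and it takes a genuinely different route from the paper. In fact $c=3$ already suffices, since $\frac{1}{2}-\frac{t}{n-1}\ge\frac{\varepsilon}{2(2+\varepsilon)}-\frac{1}{(2+\varepsilon)(\sqrt{n}-1)}\ge\frac{\varepsilon}{3(2+\varepsilon)}$.

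The paper keeps the exact parity-dependent formulas for $f(n,\ell,t)$. For each parity separately it shows that the $\binom{n-1}{t}$-term dominates the $\binom{\lfloor (n+\ell)/2\rfloor}{t}$-term. Using Lemma~\ref{lemLog}, it bounds the ratio of these binomials below by $e^{(n-\ell-1)t/(2(n-1))}\approx e^{t/2}$. It then splits at the constant $t=4\log(\frac{3}{2}+\frac{3}{\varepsilon})$:
\begin{itemize}
\item above this value the ratio is at least $\frac{3}{2}+\frac{3}{\varepsilon}$, which is balanced against a linear factor of about $\frac{\varepsilon}{2(2+\varepsilon)}n$;
\item below it the ratio exceeds $4$, while $t\le\frac{n+\ell}{3}$ makes the linear factor about $\frac{n}{6}$.
\end{itemize}

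You do three things differently.
\begin{itemize}
\item You treat both parities at once by crude bounds: $r\le 2$, both subtracted binomials are at most $\binom{\lfloor (n+\ell)/2\rfloor}{t}$, and you discard $\frac{1}{2}\binom{n-1}{t}$.
\item You replace the logarithmic lemma by the termwise inequality $\frac{a-j}{n-1-j}\le\frac{a}{n-1}$. This is more elementary and gives the stronger ratio bound $q^{-t}\approx 2^{t}$.
\item You split at $t=\frac{n-1}{8}$, so that for large $t$ an exponentially small term is compared with a positive linear one.
\end{itemize}

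Each approach buys something. The paper's exact bookkeeping is what it also needs at the critical value $t=\lfloor (n+\ell)/2\rfloor$, where your losses could not be afforded. Your version shows how much slack there is.
\begin{itemize}
\item For $t>\frac{n-1}{8}$ you only need $\frac{1}{2}-\frac{t}{n-1}>0$.
\item For integers $t<\frac{n-1}{2}$ this quantity is at least $\frac{1}{2(n-1)}$, which still beats $\frac{n+6}{2}(0.56)^{(n-1)/8}$.
\item So the same argument gives the conclusion for all $3\le t<\frac{n-1}{2}$, as soon as $\ell\le\sqrt{n}$ and $n$ exceeds an absolute constant.
\item The $\varepsilon$-dependent lower bound on $n$ is then essentially needed only to guarantee $\frac{n+\ell}{2+\varepsilon}<\frac{n-1}{2}$.
\end{itemize}
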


	\section{ Proof of Theorem \ref{mainthm}}
	We first recall several relevant definitions and results from \cite{1}. Define
	$$
	f(n,\ell,t)=g_t(n,\ell,1)-g_t(n,\ell,i_0),
	$$
	where $i_0=\lfloor \frac{n-\ell-1}{2} \rfloor$ is equal to $\frac{n-\ell}{2}-1$ when $n$ and $\ell$ have the same parity and $i_0=\frac{n-\ell-1}{2}$ if they do not. More precisely, if $n$ and $\ell$ do not have the same parity, 
	$$
	f(n,\ell,t)= {\ell+1\choose t}
	+\bigg(\frac{n-\ell-1}{2}-t+\frac{\ell+1}{n-1}t\bigg){n-1\choose t}-\frac{n-\ell+1}{2}{\frac{n+\ell-1}{2}\choose t},
	$$
	and if they do,
	$$
	f(n,\ell,t)={\ell+1\choose t}+\bigg(\frac{n-\ell}{2}-t+\frac{\ell+1}{n-1}t\bigg){n-1\choose t}
	-\bigg(\frac{n-\ell}{2}-t+1+\frac{2(\ell+1)}{n+\ell}t\bigg){\frac{n+\ell}{2}\choose t}.
	$$
	
	One can check that when $\ell=0$, the formulas above reduce to the corresponding results in \cite{1}. Our main task is to establish that $f(n,\ell,t) \geq  0.$ 
	
	\bigskip
	
	We begin by considering the critical case $t=\left\lfloor  ( n+\ell)/ 2 \right\rfloor,$ and split the analysis according to the parity of $n$ and $\ell$. Suppose first that $n$ and $\ell$ have different parity. In this case, we have $t=( n+\ell-1)/ 2.$ Hence
	
	\begin{align*}
		f(n,\ell,\frac{n+\ell-1}{2})&= 
		0+\bigg(-\ell+\frac{\ell+1}{n-1}\cdotp\frac{n+\ell-1}{2}\bigg){n-1\choose \frac{n+\ell-1}{2}}-\frac{n-\ell+1}{2}\\
		&=\frac{\ell^2+2\ell-1-(\ell-1)n}{2(n-1)}{n-1\choose \frac{n+\ell-1}{2}}-\frac{n-\ell+1}{2}\\
		&=\frac{(n-1)-\ell(n-2-\ell)}{2(n-1)}{n-1\choose \frac{n+\ell-1}{2}}-\frac{n-\ell+1}{2}.
	\end{align*}
	Note that when $2\leq \ell\leq n-3$, we have $f(n,\ell,\frac{n+\ell-1}{2})<0.$ 
	
	Next suppose that $n$ and $\ell$ have the same parity.  In this case, we have $t=( n+\ell)/ 2$. Hence
	\begin{align*}
		f(n,\ell,\frac{n+\ell}{2})&= 0+\bigg(-\ell+\frac{\ell+1}{n-1}\cdotp\frac{n+\ell}{2}\bigg){n-1\choose \frac{n+\ell}{2}}
		-\bigg(-\ell+1+\frac{2(\ell+1)}{n+\ell}\frac{n+\ell}{2}\bigg)\\
		&=\bigg(-\ell+\frac{\ell+1}{n-1}\cdotp\frac{n+\ell}{2}\bigg){n-1\choose \frac{n+\ell}{2}}-2\\
		&=\frac{n-\ell(n-3-\ell)}{2(n-1)}{n-1\choose \frac{n+\ell}{2}}-2.
	\end{align*}
	Now observe that when $\ell=n-4$, we have $f\!\left(n,\ell,\frac{n+\ell}{2}\right)=0.$ If $ 2\leq \ell<n-4$, then
	$f\!\left(n,\ell,\frac{n+\ell}{2}\right)<0.$
	
	\bigskip
	
	We now proceed to prove our main theorem. The following elementary estimate will be used repeatedly in the proof.
	
	\begin{lemma}\label{lemLog}
		For $0<a<b$ and $m\geq0$, we have
		$$\log\big(1+\frac{a}{b}\big)+
		\log\big(1+\frac{a}{b+1}\big)+\cdot\cdot\cdot
		+\log\big(1+\frac{a}{b+m}\big)
		\geq(m+1)\frac{a}{a+b+m}.$$
	\end{lemma}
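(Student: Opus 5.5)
The plan is to bound each summand from below by a quantity that no longer depends on its position in the sum, and then add up the $m+1$ identical bounds. The tool is the elementary inequality $\log(1+x)\ge \frac{x}{1+x}$, valid for all $x>-1$, which we use here for $x>0$. It follows either from concavity of $\log$ or from observing that $h(x)=\log(1+x)-\frac{x}{1+x}$ satisfies $h(0)=0$ and $h'(x)=\frac{x}{(1+x)^2}\ge 0$ for $x\ge 0$.

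Apply this with $x=\frac{a}{b+j}$ for $j=0,1,\dots,m$. Since $0<a<b$, we have $x>0$, and
$$\log\Big(1+\frac{a}{b+j}\Big)\ge \frac{a/(b+j)}{1+a/(b+j)}=\frac{a}{a+b+j}.$$
For $0\le j\le m$ we have $a+b+j\le a+b+m$, so each term is at least $\frac{a}{a+b+m}$. Summing over the $m+1$ values of $j$ gives
$$\sum_{j=0}^{m}\log\Big(1+\frac{a}{b+j}\Big)\ge (m+1)\frac{a}{a+b+m},$$
which is the claim.

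There is no real obstacle. The only point needing care is the direction of the per-term bound: the estimate $\frac{a}{a+b+j}$ decreases in $j$, so the uniform lower bound comes from the last index $j=m$. The hypothesis $a<b$ is not used; only $a,b>0$ are needed. This is consistent with the lemma being a convenience tool for later estimating ratios such as $\binom{n-1}{t}\big/\binom{(n+\ell)/2}{t}=\prod_j\frac{n-1-j}{(n+\ell)/2-j}$ through their logarithms.
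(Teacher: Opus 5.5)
Your proof is correct and is essentially the paper's argument. The paper also applies $\log(1+x)\ge \frac{x}{1+x}$ termwise, writes each bound as $1-\frac{b+j}{a+b+j}$, and then bounds every $\frac{b+j}{a+b+j}$ by $\frac{b+m}{a+b+m}$; this is exactly your step $\frac{a}{a+b+j}\ge\frac{a}{a+b+m}$. Your remark that $a<b$ is unused is right, and your non-strict final inequality is more accurate than the paper's strict one, which becomes an equality when $m=0$.
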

	
	\begin{proof}
		From the inequality $\log(1+x)\geq\frac{x}{1+x}=1-\frac{1}{1+x}$ for $x\geq0$, we deduce that
		
		\begin{align*}
			&\log(1+\frac{a}{b})+\log(1+\frac{a}{b+1})+\ldots+\log(1+\frac{a}{b+m})\\
			\geq& (1-\frac{b}{a+b})+(1-\frac{b}{a+b})+\ldots+(1-\frac{a}{b+m})\\
			=&m+1-\big(\frac{b}{a+b}+\frac{b+1}{a+b+1}+\ldots+\frac{b+m}{a+b+m}\big)\\
			>&m+1-(m+1)\frac{b+m}{a+b+m}\\
			=&(m+1)\frac{a}{a+b+m}.
		\end{align*}
		This completes the proof.
	\end{proof}

	\begin{proof}
		To prove Theorem \ref{mainthm}, we shall prove a slightly stronger statement: the second term in the expression of $f(n,\ell,t)$ is larger than  the third term, regardless of the parity of $n$ and $\ell$. To be more precise, when $n$ and $\ell$ do not have the same parity, we shall show that
		\begin{equation}\label{diffParity}
			\bigg(\frac{n-\ell-1}{2}-t+\frac{\ell+1}{n-1}t\bigg){n-1\choose t}
			\geq\frac{n-\ell+1}{2}{\frac{n+\ell-1}{2}\choose t}
		\end{equation}
		holds, and when $n$ and $\ell$ have the same parity, we shall show that
		\begin{equation}\label{sameParity}
			\bigg(
			\frac{n-\ell}{2}-t+\frac{\ell+1}{n-1}t\bigg){n-1\choose t}
			\geq\bigg(
			\frac{n-\ell}{2}-t+1+\frac{2(\ell+1)}{n+\ell}t
			\bigg){\frac{n+\ell}{2}\choose t}.
		\end{equation}
		
		We divide the proof of Theorem \ref{mainthm} into the following two cases, according to the range of $t$.
		\vskip 3mm
		{\bf Case 1.} $4\log(\frac{3}{2}+\frac{3}{\ve})\leq t\leq \frac{n+\ell}{2+\ve}$.
		\vskip 3mm
		Observe that ${m+1\choose i}=\frac{m+1}{m+1-i}{m\choose i}$. Using this identity iteratively, we see that for $n,\ell$ with different parity, 
		$$
		{n-1 \choose t}
		=\frac{(n-1)(n-2)\cdot\cdot\cdot\frac{n+\ell+1}{2}}{(n-1-t)(n-2-t)\cdot\cdot\cdot(\frac{n+\ell+1}{2}-t)}
		{\frac{n+\ell-1}{2}\choose t}.
		$$
		Let
		$$L:=\frac{(n-1)(n-2)\cdot\cdot\cdot\frac{n+\ell+1}{2}}
		{(n-1-t)(n-2-t)\cdot\cdot\cdot(\frac{n+\ell+1}{2}-t)};$$
		then
		$$
		\log L=\log(1+\frac{t}{\frac{n+\ell+1}{2}-t})
		+\cdot\cdot\cdot+\log(1+\frac{t}{n-1-t}).
		$$
		By Lemma \ref{lemLog}, with $a=t,b=\frac{n+\ell+1}{2}-t,m=\frac{n-\ell-1}{2}-1$, we have
		$$
		L\geq e^{\frac{(n-\ell-1)t}{2(n-1)}}
		\geq e^{\frac{n-\sqrt{n}-1}{n-1}\cdot\frac{t}{2}}
		\geq e^{\frac{1}{2}\cdot2\log(\frac{3}{2}+\frac{3}{\ve})}
		=\frac{3}{2}+\frac{3}{\ve},
		$$
		provided that $\ell\leq\sqrt{n}$ and $t\geq4\log(\frac{3}{2}+\frac{3}{\ve})$ (note that $n\geq9$ guarantees $\frac{n-\sqrt{n}-1}{n-1}>\frac{1}{2}$).
		
		Therefore, to prove the inequality (\ref{diffParity}), it is sufficient to prove
		$$
		(\frac{3}{2}+\frac{3}{\ve})\big(\frac{n-\ell-1}{2}-t+\frac{\ell+1}{n-1}t\big)
		\geq \frac{n-\ell+1}{2}.
		$$
		Since  $t\leq \frac{n+\ell}{2+\ve}$ and $\ell\leq \sqrt{n},$ we have
		\begin{align*}
		\big(\frac{n-\ell-1}{2}-t+\frac{\ell+1}{n-1}t\big)
			\geq (\frac{n-\sqrt{n}-1}{2}-\frac{n+\sqrt{n}}{2+\ve}).
		\end{align*}
		By using $\frac{\sqrt{n}+1}{2}+\frac{\sqrt{n}}{2+\ve}<\sqrt{n}$ as long as $n>(\frac{2+\ve}{\ve})^2$, we can obtain
		
		\begin{align}\label{eq3}
		(\frac{n-\sqrt{n}-1}{2}-\frac{n+\sqrt{n}}{2+\ve})\ge
		(\frac{\ve}{2(2+\ve)}n-\sqrt{n}).
	\end{align}
Therefore, we have
		  \begin{align*}
			\left(\frac{3}{2}+\frac{3}{\ve}\right) \left(\frac{n-\ell-1}{2}-t+\frac{\ell+1}{n-1}t\right)
			 &\ge	\left(\frac{3}{2}+\frac{3}{\ve}\right) \left(\frac{\ve}{2(2+\ve)}n-\sqrt{n}\right) \\
			&=\frac{3}{4}n-\left(\frac{3}{2}+\frac{3}{\ve}\right)\sqrt{n}.
		\end{align*}
		 
		Now, it remains to show
		$$\sqrt{n}\left(\frac{3}{4}\sqrt{n}-(\frac{3}{2}+\frac{3}{\ve})\right)>\frac{1}{2} n>\frac{n-\ell+1}{2},$$
		which follows readily from the assumption $n\geq36(\frac{2+\ve}{\ve})^2$.

		The proof of inequality (\ref{sameParity}) is exactly the same, but we still present it here for completeness. Assume that $n$ and $\ell$ have the same parity. We need to show
		$$
		\big(\frac{n-\ell}{2}-t+\frac{\ell+1}{n-1}t\big){n-1\choose t}
		\geq\big(\frac{n-\ell}{2}-t+1+\frac{2(\ell+1)}{n+\ell}t\big){\frac{n+\ell}{2}\choose t}.
		$$
		First note that
		$$
		{n-1\choose t}=\frac{n-1}{n-1-t}\ldots\frac{\frac{n+\ell}{2}+1}{\frac{n+\ell}{2}+1-t}{\frac{n+\ell}{2}\choose t}.
		$$
		Write
		$$
		Z:=\frac{n-1}{n-1-t}\ldots\frac{\frac{n+\ell}{2}+1}{\frac{n+\ell}{2}+1-t}.
		$$
		Then, by Lemma \ref{lemLog}, with $a=t, b=\frac{n+\ell}{2}+1-t,m=\frac{n-\ell-2}{2}$, we have
		$$
		\log Z\geq(\frac{n-\ell}{2}-1)\frac{t}{n-1}.
		$$
		Hence,
		$$
		Z\geq e^{\frac{n-\ell-2}{n-1}\cdot\frac{t}{2}}\geq e^{\frac{1}{2}\cdot2\log(\frac{3}{2}+\frac{3}{\ve})}
		=\frac{3}{2}+\frac{3}{\ve}.
		$$
		Therefore, inequality (\ref{sameParity}) boils down to
		$$
		Z\big(\frac{n-\ell}{2}-t+\frac{\ell+1}{n-1}t\big)\geq\frac{n-\ell}{2}-t+1+\frac{2(\ell+1)}{n+\ell}t.
		$$
		By (\ref{eq3}), we have
		\begin{align*}
			Z\big(\frac{n-\ell}{2}-t+\frac{\ell+1}{n-1}t\big) \geq& \big(\frac{3}{2}+\frac{3}{\ve}\big)\big(\frac{\ve}{2(2+\ve)}n-\sqrt{n}\big)\\
			=&\frac{3}{4}n-\frac{3(2+\ve)}{2\ve}\sqrt{n}.
		\end{align*}
		Since $n\geq36(\frac{2+\ve}{\ve})^2$, we have $\frac{n}{4}\geq\frac{3(2+\ve)}{2\ve}\sqrt{n}$. Thus,
		$$
		\frac{3}{4}n-\frac{3(2+\ve)}{2\ve}\sqrt{n}=\frac{n}{2}+\frac{n}{4}-\frac{3(2+\ve)}{2\ve}\sqrt{n}\geq \frac{n}{2}\ge \frac{n-\ell}{2}-t+1+\frac{2(\ell+1)}{n+\ell}t.
		$$
		This completes the proof of Case 1.
		
		\vskip 3mm
		
		{\bf Case 2.} $3\leq t\leq 4\log(\frac{3}{2}+\frac{3}{\ve})$.
		\vskip 3mm
		Once $\ve\in(0,\frac{1}{2})$ is specified, the assumption $n\geq36(\frac{2+\ve}{\ve})^2$ implies $$\frac{n+\ell}{3}\geq12(\frac{2+\ve}{\ve})^2>\frac{48}{\ve^2}>\frac{48}{\ve}.$$
		On the other hand,
		$$
		4\log(\frac{3}{2}+\frac{3}{\ve})=4\log(\frac{3}{2}(1+\frac{2}{\ve}))
		=4(\log\frac{3}{2}+\log(1+\frac{2}{\ve}))
		\leq4(\log\frac{3}{2}+\frac{2}{\ve})<4+\frac{8}{\ve}<\frac{48}{\ve}.$$
		We have proved
		$$
		\frac{n+\ell}{3}\geq4\log(\frac{3}{2}+\frac{3}{\ve}).
		$$
		
		Therefore, in the following, we will prove that inequalities (1) and (2) hold for $3\le t\le \frac{n+\ell}{3}$. Throughout the following proof, we use the fact that $n > 600$, which follows from $n \ge 36\left(\frac{2+\varepsilon}{\varepsilon}\right)^2$ holds for every $0 < \varepsilon < \tfrac{1}{2}$, which implies $n > 600$.
		
		First assume that $n,\ell$ have different parity. Recall that
		$$
		L:=\frac{(n-1)(n-2)\cdot\cdot\cdot\frac{n+\ell+1}{2}}
		{(n-1-t)(n-2-t)\cdot\cdot\cdot(\frac{n+\ell+1}{2}-t)}.
		$$
		By Lemma \ref{lemLog} and the assumptions $\ell\leq\sqrt{n}$, $t\geq3$ and $n\geq600>256$, we have
		$$
		L\geq e^{\frac{(n-\ell-1)t}{2(n-1)}}
		\geq e^{\frac{n-\sqrt{n}-1}{n-1}\cdot\frac{t}{2}}
		\geq e^{\frac{256-16-1}{256-1}\cdot\frac{3}{2}}>e^{1.4}>4.
		$$
		Therefore, to prove the inequality (\ref{diffParity}), it is sufficient to prove
		$$4\big(\frac{n-\ell-1}{2}-t+\frac{\ell+1}{n-1}t\big)\geq \frac{n-\ell+1}{2}.$$
		Note that
		$$4\big(\frac{n-\ell-1}{2}-t+\frac{\ell+1}{n-1}t\big)>4(\frac{n-\sqrt{n}-1}{2}-\frac{n+\sqrt{n}}{3})>4(\frac{n}{6}-\sqrt{n})$$ and $\frac{n-\ell+1}{2}\leq\frac{n}{2}$, so it remains to show $\frac{2}{3}\sqrt{n}-4\geq\frac{1}{2}\sqrt{n}$, which follows from $n\geq 600>24^2$.
		
		Now suppose that $n$ and $\ell$ have the same parity and we need to prove inequality (\ref{sameParity}). We still write $$Z:=\frac{n-1}{n-1-t}\ldots\frac{\frac{n+\ell}{2}+1}{\frac{n+\ell}{2}+1-t};$$
		then, under the assumptions,
		$$Z\geq e^{\frac{n-\sqrt{n}-2}{n-1}\cdot\frac{t}{2}}>e^{1.4}>4.$$
		Therefore, inequality (\ref{sameParity}) reduces to
		$$
		Z\big(\frac{n-\ell}{2}-t+\frac{\ell+1}{n-1}t\big)\geq\frac{n-\ell}{2}-t+1+\frac{2(\ell+1)}{n+\ell}t.
		$$
		
		Since
		$$\frac{n-\ell}{2}-t+1+\frac{2(\ell+1)}{n+\ell}t\leq\frac{n}{2}+1$$
		and
		$$
		4\big(\frac{n-\ell}{2}-\frac{n+\ell}{3}\big)=\frac{2}{3}n-\frac{10}{3}\ell\ge\frac{2}{3}n-4\sqrt{n},
		$$

		we need only to show that $\frac{n}{6}\geq4\sqrt{n}+1$, which follows readily from $n\geq600$.
		
	\end{proof}	
	
	\section*{Declaration}
	
	\noindent$\textbf{Conflict~of~interest}$
	The authors declare that they have no known competing financial interests or personal relationships that could have appeared to influence the work reported in this paper.
	
	\noindent$\textbf{Data~availability}$
	Data sharing not applicable to this paper as no datasets were generated or analyzed during the current study.

\end{document}